\documentclass[12pt,a4paper, twoside]{article}
\usepackage[utf8]{inputenc}
\usepackage{amsmath}
\usepackage{amsfonts}
\usepackage{amssymb}
\usepackage{graphicx}
\usepackage{enumitem}
\author{Marco Bertenghi}
\usepackage{amsthm}
\usepackage{bbm}
\usepackage{xcolor}
\usepackage{centernot}
\usepackage{doi}
\usepackage{booktabs}
\usepackage{afterpage}
\date{}
\usepackage[centering, a4paper]{geometry}

\newtheorem{thm}{Theorem}[section]

\newtheorem{cor}{Corollary}[section]

\theoremstyle{definition}

\usepackage[square, numbers]{natbib}
  
\newcommand{\EE}{\mathbb{E}} 
\newcommand{\PP}{\mathbb{P}} 

\usepackage{hyperref}
\hypersetup{
	citecolor=blue,
    colorlinks=true,
    linkcolor=blue,
    filecolor=magenta,      
    urlcolor=cyan,
}

\title{Functional limit theorems for the Multi-dimensional Elephant Random Walk}
\author{Marco Bertenghi\footnote{Institute of Mathematics, University of Zurich, Winterthurerstrasse 190, CH-8057 Zürich, Switzerland}}
\date{\today}

\begin{document}
\maketitle
\begin{abstract}
In this article we shall derive functional limit theorems for the multi-dimensional elephant random walk (MERW) and thus extend the results provided for the one-dimensional marginal by Bercu and Laulin in \cite{berculaulin}. The MERW is a non-Markovian discrete-time random walk on $\mathbb{Z}^d$ which has a complete memory of its whole past, in allusion to the traditional saying that \textit{an elephant never forgets}. As the name suggests, the MERW is a $d$-dimensional generalisation of the elephant random walk (ERW), the latter was first introduced by Schütz and Trimper \cite{schuetz} in 2004. We measure the influence of the elephant's memory by a so-called \textit{memory parameter} $p$ between zero and one. A striking
feature that has been observed in \cite{schuetz} is that the long-time behaviour of the ERW exhibits a phase transition at some critical memory parameter $p_c$. We investigate the asymptotic behaviour of the MERW in all memory regimes by exploiting a connection between the MERW and Pólya urns, following similar ideas as in the work by Baur and Bertoin in \cite{bertoinbaur} for the ERW.
\end{abstract}
\section{Introduction}
The elephant random walk (ERW), first introduced by Schütz and Trimper in 2004, is an interesting simple random process arising from mathematical physics \cite{schuetz}. The ERW
model is a discrete-time nearest-neighbour
random walk on the integers, which remembers its full past and
chooses its next step according to the following dynamics: First, it selects uniformly at random a step from the past, and then, with probability $p \in (0,1)$
it repeats what it did at the remembered time, whereas with complementary probability $1 - p$, it makes a step in the opposite direction. Notably, in the borderline case of $p=1/2$, the memory has no longer an effect on the elephant's movement and we obtain the classical simple random walk on the integers. These heuristics of the ERW naturally translate into higher dimensions: We consider the $d$-dimensional grid $\mathbb{Z}^d$ for some $d \geq 1$, then, the elephant selects uniformly at random a step from its past and, with probability $p$, it repeats what it did at the remembered time, whereas with probability $(1-p)/(2d-1)$, it moves from its current position to one of the remaining $2d-1$ neighbours. We refer to the next section for a precise definition.

Over the last decade many references addressing the ERW have emerged; see \cite{arxivbaur, bertoinbaur, bercu, berculaulin, colettischuetz,  kuersten} with further references therein. A striking feature that has been pointed at in those works is that the long-time behaviour of the ERW exhibits a phase transition at some critical memory parameter $p_c$. Governed by this critical memory parameter $p_c$, it is common in the literature to distinguish between three regimes: the so-called \textit{diffusive regime} ($p<p_c)$, the \textit{critical regime} ($p=p_c$) and the \textit{superdiffusive regime} ($p>p_c$). In contrast to the ERW the MERW, while being a natural process to consider, has received far less attention than its one-dimensional special case. In \cite{cressoni} and \cite{lyu}  the authors consider the special case of $d=2$. Quite recently, Bercu and Laulin \cite{berculaulin} established in the
\textit{diffusive regime} almost sure convergence in the sense of the law of large numbers for the MERW. Further, they provided the asymptotic normality of the
MERW when properly normalised in both, the \textit{diffusive} and \textit{critical regimes}. In the \textit{superdiffusive regime}, they proved the
almost sure convergence of the MERW to some non-degenerate random variable. Their
analysis relied on asymptotic results for multi-dimensional martingales.

 Here, as in the article of Baur and Bertoin \cite{bertoinbaur}, we use a connection between the MERW and Pólya urns. Said connection allows us to establish functional limit theorems for the MERW by using the results provided by Janson \cite{janson} for the more general framework of Pólya urns. Notably, our limit theorems are stronger than finite-dimensional
convergence of the MERW. They imply convergence
of continuous functionals of the properly rescaled MERW  and thus generalise results shown in Bercu and Laulin \cite{berculaulin}. For  a direct application, we show that the (rescaled) Euclidean norm process of the MERW converges to a Bessel process and we give a direct proof of a recent result by Bercu and Laulin \cite{berculaulincenter} on the center of mass of the MERW.

We stress that the ERW is a \textit{time-inhomogeneous} Markov chain, although some works in the literature improperly assert its non-Markovian character. In contrast, the MERW in dimensions greater or equal to two is indeed non-Markovian. 

The rest of this paper is arranged as follows: In the next section, we give a precise mathematical definition of the MERW. In Section \ref{sec:three}, we introduce a particular discrete-time urn model which will be used to describe the position of the MERW and the connection between the two processes becomes apparent. Finally, we show in Section \ref{sec:four} how the known limit results on the composition of the urn process provided by Janson in \cite{janson} can be translated into statements
about the position of the MERW when time goes to infinity. For the reader's convenience, we gather rather technical calculations used in Section \ref{sec:four} in an Appendix \ref{Appendix}. 
\section{Definition of the MERW}
We now introduce the exact definition of the MERW. The MERW is a $d$-dimensional nearest-neighbour random walk $(S_n)_{n \in \mathbb{N}_0}$ on the grid $\mathbb{Z}^d$. We agree that the MERW is started from the origin in $\mathbb{Z}^d$ at time zero. For any time $n \geq 1$ the position $S_n$ of the MERW can be described as follows: At initial time $n=1$, for the sake of definiteness, the elephant takes a deterministic step in the direction of $e_1=(1,0, \dots , 0) \in \mathbb{R}^d$. Afterwards, at any time $n \geq 2$, the elephant chooses uniformly at random  a number $n'$ among the previous times $1, \dots ,n-1$ and, independently of its path up to time $n-1$. Then, with probability $p \in (0,1)$ the elephant moves in the same direction as it did for the time $n'$, or else, it moves in one of the remaining $2d-1$ directions with the same probability of $(1-p)/(2d-1)$ for each remaining direction.

 More precisely: Let $e_1, \dots , e_d$ denote the standard basis for the Euclidean space $\mathbb{R}^d$ and let $S_0= \mathbf{0},$ where we shall denote by $\mathbf{0}:= (0, \dots , 0) \in \mathbb{Z}^d$ the origin of $\mathbb{Z}^d$. At any time $n \geq 1$, the position of the MERW is given by 
 \begin{align*}
 S_n = S_{n-1}+ \sigma_n,
 \end{align*}
 where $\sigma_n,$ $n \in \mathbb{N}=\{1,2, \dots\}$ are random variables taking values in $\{\pm e_1, \dots , \pm e_d\}$, which are specified as follows: First, we set $\sigma_1=e_1$ and we note that all the remaining $2d-1$ directions can be obtained by rotations. At any later time $n \geq 2$,  we choose a number $n'$ uniformly at random among the previous times $1, \dots , n-1$ and set $M_{n'}:= \{ \pm e_1, \dots , \pm e_d\} \setminus \{\sigma_{n'}\}$. We then define
 \begin{align*}
\PP( \sigma_n = \sigma_{n'})&=p, \\ 
\PP( \sigma_n = \sigma) &= \frac{1-p}{2d-1} \qquad \text{for all } \sigma \in M_{n'},
\end{align*}
where $p \in (0,1)$ is a memory parameter which is inherent to the model. We implicitly agree that the various random choices made in this construction are independent from each other. Notably, when $d=1$, we obtain again the ERW as described by Schütz and Trimper in \cite{schuetz}, see also \cite[Section 2]{bertoinbaur}.

\section{Connection to Pólya-type urns} \label{sec:three}
In this section we explain how to model the MERW with the help of Pólya-type urns.

 Imagine a discrete-time urn with balls of $2d$ distinct colours. The composition of the urn at any time $n \in \mathbb{N}$ is specified by a $2d$-dimensional random vector $X_n=(X_n^1,X_n^2, \dots ,X_n^{2d})$, where each component $X_n^i$ counts the number of balls of colour $i \in \{1, \dots ,2d\}$ present in the urn at time $n$. We agree that $X_0=\mathbf{0} \in \mathbb{Z}^{2d}$, that is there are no balls of any colour present in the urn at time zero. Further, we restrict ourselves to the initial deterministic configuration $X_1 = (1,0, \dots , 0) \in \mathbb{Z}^{2d}$ meaning that we start the urn with exactly one ball of a certain colour. The dynamics of the urn are then given as follows: At any time $n \geq 2$ , we draw a ball uniformly at random from the urn, observe its colour, put it back to the urn and add with probability $p$ a ball of the same colour, or add a ball of the $2d-1$ remaining colour each with probability $(1-p)/(2d-1)$. Finally, we update $X_n=(X_n^1, \dots ,  X_n^{2d})$ accordingly, such that $X_n$ describes the composition of the urn after the $(n-1)$\textit{st} drawings. In light of this construction, we notice that the urn contains colour-coded information, in form of balls, of the steps taken by the MERW up until time $n$. 

The connection to the MERW model is remarkably simple: If $S_n$ denotes the position of the MERW started from zero at time zero, then
\begin{align} \label{eq:connectionMERWUrn}
(S_n)_{n \in \mathbb{N}} =_d \left( \left(X_n^1-X_n^2 \right)e_1 + \dots + \left( X_n^{2d-1}-X_n^{2d} \right) e_d \right)_{n \in \mathbb{N}}
\end{align}
where $=_d$ refers to equality in law. Evidently we obtain again that $S_1=e_1$ thanks to our initial configuration of $X_1$. Observe that (\ref{eq:connectionMERWUrn}) is a representation of the process $(S_n)_{n \in \mathbb{N}}$ as a continuous function of the urn model $(X_n)_{n \in \mathbb{N}}$.  In Section \ref{sec:four}, where we state and prove our main results, we shall crucially exploit (\ref{eq:connectionMERWUrn}) in conjunction with the continuous mapping theorem. 

The urn described to model the MERW fits into a broader setting of so-called generalised Friedman's or Pólya urns. We refer to Janson \cite{janson} for a very broad treatment on this subject. 

Among others, important key quantities that govern the long-time behaviour of the urn process are the eigenvalues and eigenvectors of the so-called \textit{mean replacement matrix}, see Display (2.1) in Janson \cite{janson}. In our case, it is given by the $2d \times 2d$ matrix
\begin{align} \label{mat:meanreplacementmat}
A = \begin{pmatrix}
p & \frac{1-p}{2d-1} & \dots & \dots & \frac{1-p}{2d-1}\\
\frac{1-p}{2d-1} & \ddots & \ddots &  & \vdots \\ 
\vdots & \ddots&\ddots & \ddots & \vdots \\
 \vdots &  & \ddots  & \ddots & \frac{1-p}{2d-1} \\
 \frac{1-p}{2d-1} & \dots & \dots&  \frac{1-p}{2d-1} & p
\end{pmatrix}= \frac{1-p}{2d-1}J_{2d} + \frac{2dp-1}{2d-1}I_{2d},
\end{align}
where $J_{2d}$ denotes the matrix of ones in dimensions $2d \times 2d$ and $I_{2d}$ is the $2d\times 2d$ identity matrix. Elementary linear algebra yields that $A$ has the eigenvalues $\lambda_1=1$ of multiplicity one and $\lambda_2 = (2dp-1)/(2d-1)$ of multiplicity $2d-1$. The right and left eigenvectors corresponding to the largest eigenvalue $\lambda_1$ are given by $v_1 = \frac{1}{2d}(1, \dots ,1 )' \in \mathbb{R}^{2d}$ and $u_1=(1, \dots , 1) \in \mathbb{R}^{2d}$ respectively. 

It is classical, (consult for example, Athreya and Karlin \cite{karlin}, Janson \cite{janson}, Kesten and Stigum \cite{kesten}, Chauvin, Pouyanne and Sahnoun \cite{chauvin}) that the asymptotics of the urn depends on the position of $\lambda_2/ \lambda_1$ with respect to $1/2$, or, in the situation of a more general urn, assuming that the largest eigenvalue $\lambda^*$ is positive and simple, one has to check whether there exists an eigenvalue different from $\lambda^*$ such that its real part is strictly larger than $\lambda^*/2$. In our case, solving $\lambda_2/\lambda_1 =1/2$ shows, on a formal level, why for the MERW model a phase transition from a \textit{diffusive} to a \textit{superdiffusive regime} occurs at critical memory parameter \begin{align} \label{eq:criticalmemoryparameter}
p_c=p_c^d= \frac{2d+1}{4d}.
\end{align}
We observe that we rediscover the critical memory parameter displayed in (2.6) in Bercu and Laulin \cite{berculaulin}, moreover, for $d=1$ we arrive again at $p_c=3/4$, that is the phase transition observed by Schütz and Trimper \cite{schuetz}. According to this observation we shall refer to the memory parameter $p \in (0, p_c^d)$ as being in the \textit{diffusive regime}, for $p=p_c^d$ as being in the \textit{critical regime} and for $p \in (p_c^d,1)$ as being in the \textit{superdiffusive regime}.
\section{Results and proofs for the MERW} \label{sec:four}
In this section we shall derive functional limit theorems for the MERW in all regimes. For our purpose, it shows to be most fruitful to adapt the general results from Janson in \cite{janson} and translate them back into the setting of the MERW model by using the connection displayed in (\ref{eq:connectionMERWUrn}) and the continuous mapping theorem. Throughout this section we shall denote by $S_n$ the position of the MERW at time $n$ and by $X_n=(X_n^1, \dots , X_n^{2d})$ the configuration of the urn simulating the MERW at time $n$ as introduced in the previous section. 
\subsection{A LLN-type convergence result for the MERW in all regimes ($0<p<1$)}
Our first result deals with a strong law of large numbers for the MERW. In 2017 Coletti, Gava and Schütz \cite{colettischuetz} established a LLN-type of convergence in all regimes $p \in (0,1)$ for the ERW, see Theorem 1 and Corollary 1 in \cite{colettischuetz}. Later, in 2019, Bercu and Laulin extended this result, see Theorem 3.1 in \cite{berculaulin}, for the MERW in its diffusive regime $0<p<p_c^d$. Observe that their LLN for the MERW crucially depends on $p_c^d$ and thus on the dimension $d$. Here, we shall extend the findings of Bercu and Laulin to $p \in (0,1)$ and thus eliminate the dependency on the dimension. In light of the result provided by Coletti, Gava and Schütz, this then establishes the anticipated behaviour also in higher dimensions for the MERW.
\begin{thm}[Strong law of large numbers for the MERW] For all $p \in (0,1)$ we have the almost sure convergence as $n$ tends to infinity
\begin{align*}
\frac{S_n}{n} \to \mathbf{0}.
\end{align*}
\end{thm}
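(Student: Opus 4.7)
The plan is to use the representation (\ref{eq:connectionMERWUrn}) to transfer the question to the associated urn process $(X_n)_{n\in\mathbb{N}}$, which may be built on the same probability space as $(S_n)_{n\in\mathbb{N}}$ (the balls literally record the steps of the elephant). Writing
\[
\frac{S_n}{n} \;=\; \sum_{k=1}^{d} \frac{X_n^{2k-1}-X_n^{2k}}{n}\,e_k,
\]
it suffices to prove $X_n^{i}/n \to 1/(2d)$ almost surely for each $i\in\{1,\dots,2d\}$, since then every coordinate on the right-hand side vanishes in the limit.

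For the second step, I would invoke the strong law of large numbers for generalised Pólya urns from Janson \cite{janson}. The mean replacement matrix $A$ in (\ref{mat:meanreplacementmat}) has simple largest eigenvalue $\lambda_1=1$, with all other eigenvalues equal to $\lambda_2=(2dp-1)/(2d-1)$. The eigenvalue condition required for the LLN is merely $\mathrm{Re}(\lambda_2)<\lambda_1$, which is equivalent to $p<1$ and is therefore satisfied in \emph{every} regime of the memory parameter; the stronger condition $\mathrm{Re}(\lambda_2)<\lambda_1/2$ that governs the diffusive/superdiffusive CLT-level phase transition is not needed here. Janson's theorem then yields $X_n/n \to c\,v_1$ almost surely for some constant $c$.

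The constant $c$ is pinned down by the elementary observation that exactly one ball is added per step, so $\sum_{i=1}^{2d} X_n^i = n$ deterministically for all $n\ge 1$. With the normalisation $v_1=\tfrac{1}{2d}(1,\dots,1)'$ fixed in Section~\ref{sec:three}, summing the limit forces $c=1$, and hence $X_n^i/n\to 1/(2d)$ almost surely for every $i$. Substituting into the display above gives $S_n/n\to\mathbf{0}$ almost surely, as claimed. The only point that requires genuine verification is that Janson's hypotheses (irreducibility of the urn scheme, positivity of $v_1$, simplicity of $\lambda_1$, and the integrability of the replacement vectors) all hold in our setting; each follows at once from the explicit form of $A$ and the boundedness of replacements.
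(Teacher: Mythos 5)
Your argument is correct and follows essentially the same route as the paper: both reduce the claim via the representation (\ref{eq:connectionMERWUrn}) to the almost sure convergence $n^{-1}X_n \to v_1 = \frac{1}{2d}(1,\dots,1)'$ supplied by Janson's strong law for generalised P\'olya urns, and then read off that the coordinate differences vanish. Your additional remarks --- pinning the normalising constant through the deterministic total ball count and noting that the urn can be coupled on the same probability space so that the almost sure statement genuinely transfers --- are sound refinements of the same proof rather than a different one.
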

\begin{proof}
By Theorem 3.21 in  Janson \cite{janson} (see also Section V.9.3. in \cite{athreya}) we have the almost sure convergence as $n$ tends to infinity of $n^{-1} X_n \to  \lambda_1 v_1$, where $\lambda_1$ is the largest eigenvalue of the mean replacement matrix $A$ as given in (\ref{mat:meanreplacementmat}) and $v_1$ is its corresponding (right) eigenvector. As $\lambda_1 =1$ and $v_1= \frac{1}{2d}(1, \dots , 1)' \in \mathbb{R}^{2d}$ the claim follows by the continuous mapping theorem and the representation established in (\ref{eq:connectionMERWUrn}). This concludes the proof.
\end{proof}
\subsection{The diffusive regime $(0<p< p_c^d)$}
In the diffusive regime, we provide a functional central limit theorem for the MERW which will generalise the asymptotic normality provided by Bercu and Laulin, see Theorem 3.3 in \cite{berculaulin}. This convergence result deals with a distributional convergence of the MERW, which holds in the Skorokhod space $D([0, \infty))$ of right-continuous functions with left-hand limits. We refer to Chapter VI in \cite{Jacod} for background on the Skorokhod topology and weak convergence of stochastic processes.
\begin{thm}[Central limit theorem for the MERW]\label{thm:cltMERW} Let $0<p<p_c^d$. Then, for $n$ tending to infinity, we have the distributional convergence in $D([0, \infty))$:
\begin{align*}
\left( \frac{S_{\lfloor tn \rfloor}}{\sqrt{n}}, t \geq 0 \right) \implies (W_t, t \geq 0),
\end{align*}
where $W=(W_t, t \geq 0)$ is a continuous $\mathbb{R}^d$-valued centered Gaussian process started from zero and with covariance structure specified by 
\begin{align} \label{disp:covariancestructurediffusive}
\EE(W_sW_t')= \frac{(2d-1)}{d(1+2d-4dp)}s \left( \frac{t}{s}\right)^{ \frac{2dp-1}{2d-1}} I_d, \quad 0 < s \leq t. 
\end{align}
\end{thm}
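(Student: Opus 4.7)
The plan is to lift the functional CLT for generalised Pólya urns due to Janson \cite{janson} to the MERW via the coupling (\ref{eq:connectionMERWUrn}) and the continuous mapping theorem. First, I would verify that we are in the regime where Janson's Gaussian functional limit applies, namely $\lambda_2 < \lambda_1/2$; by the spectral computation of Section \ref{sec:three} this is precisely the assumption $0 < p < p_c^d$ of Theorem \ref{thm:cltMERW}. All remaining hypotheses (essentiality, irreducibility, bounded replacements) are immediate because the urn increments live in a finite alphabet. Janson's theorem then supplies a continuous $\mathbb{R}^{2d}$-valued centered Gaussian process $Y = (Y_t)_{t \geq 0}$ with
\begin{align*}
\Bigl(\frac{X_{\lfloor nt\rfloor} - \lfloor nt\rfloor\,\lambda_1 v_1}{\sqrt{n}}\Bigr)_{t \geq 0} \implies (Y_t)_{t \geq 0} \qquad \text{in } D([0,\infty), \mathbb{R}^{2d}).
\end{align*}

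Next, consider the linear map $L \colon \mathbb{R}^{2d} \to \mathbb{R}^d$ defined by $L(x_1,\ldots,x_{2d}) = \sum_{i=1}^{d}(x_{2i-1} - x_{2i})e_i$, so that (\ref{eq:connectionMERWUrn}) reads $S_n =_d L X_n$. The key algebraic observation is that $L v_1 = 0$, since every coordinate of $v_1 = (2d)^{-1}(1,\ldots,1)'$ is the same; hence the centering in the display above is annihilated by $L$, and
\begin{align*}
\frac{S_{\lfloor nt\rfloor}}{\sqrt{n}} =_d L\Bigl(\frac{X_{\lfloor nt\rfloor} - \lfloor nt\rfloor\,\lambda_1 v_1}{\sqrt{n}}\Bigr).
\end{align*}
Composition with the bounded linear map $L$ is continuous from $D([0,\infty), \mathbb{R}^{2d})$ to $D([0,\infty), \mathbb{R}^{d})$, so the continuous mapping theorem delivers functional convergence to the continuous centered Gaussian process $W_t := L Y_t$.

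The remaining and main step is to identify $\EE[W_s W_t']$ with the right-hand side of (\ref{disp:covariancestructurediffusive}). Janson's explicit formula expresses $\EE[Y_s Y_t']$ as a spectral integral over the subdominant eigenspaces of $A$. In our setting the subdominant spectrum consists of the single eigenvalue $\lambda_2 = (2dp-1)/(2d-1)$ acting on $V_2 = \{v \in \mathbb{R}^{2d} : u_1 \cdot v = 0\}$, so the spectral sum collapses to a single term of the form $s(t/s)^{\lambda_2}\Sigma$, with a temporal factor $1/(1-2\lambda_2) = (2d-1)/(1+2d-4dp)$ coming out of Janson's time integral — which, reassuringly, vanishes exactly at $p = p_c^d$. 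The identity-matrix form of the limiting covariance is then forced by the symmetry group of the model: permutations of the $d$ coordinate axes combined with sign flips within each pair $\{2i-1, 2i\}$ commute simultaneously with $A$, with the one-step replacement covariance matrix $B$, and with $L$, so $L \Sigma L'$ must be a scalar multiple of $I_d$. Reading off the scalar from one diagonal entry yields the constant $(2d-1)/(d(1+2d-4dp))$. The main obstacle is therefore not the weak-convergence part, which is an almost mechanical combination of Janson and continuous mapping, but the bookkeeping needed to extract this prefactor — in particular, tracking how the replacement covariance $B$ projects onto $V_2$ and evaluating the time integral from Janson's formula.
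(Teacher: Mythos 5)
Your proposal is correct and follows essentially the same route as the paper: Janson's functional CLT for the urn composition, transfer to $S_n$ via the linear map underlying (\ref{eq:connectionMERWUrn}) together with the continuous mapping theorem (noting that the centering $tn\lambda_1 v_1$ is annihilated), and identification of the limiting covariance with the temporal factor $s(t/s)^{\lambda_2}/(1-2\lambda_2)$. The only cosmetic difference is that you deduce the scalar-multiple-of-$I_d$ form of the pushed-forward covariance by a symmetry argument, whereas the paper computes the matrix $\Sigma_I$ in (\ref{eq:sigmaDiffusive}) and the matrix exponential explicitly; both yield (\ref{disp:covariancestructurediffusive}).
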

For $t=s=1$, we rediscover the asymptotic normality established under Theorem 3.3 by Bercu and Laulin \cite{berculaulin}. Furthermore, specialising on $d=1$ yields Theorem 1 established by Baur and Bertoin \cite{bertoinbaur}.
\begin{proof}[Proof of Theorem \ref{thm:cltMERW}]
We apply Theorem 3.31(i) of Janson \cite{janson}, which establishes that 
\begin{align*}
\left( n^{-1/2} \left( X_{\lfloor tn\rfloor} - tn \lambda_1 v_1 \right), t \geq 0 \right)
\end{align*}
converges in distribution in $D([0, \infty))$ towards a continuous $\mathbb{R}^d$-valued centered Gaussian process $V=(V_t, t \geq 0)$ started from zero. In our case, we have $\lambda_1=1$ and $v_1 = \frac{1}{2d}(1, \dots , 1) \in \mathbb{R}^{2d}$. Further, the covariance structure of $V$ is closer specified under Remark 5.7 Display (5.6) in Janson \cite{janson}. In light of said remark we obtain 
\begin{align*}
\EE(V_sV_t')= s \Sigma_I e^{ \log(t/s)A}, \quad 0 < s \leq t,
\end{align*}
with $\Sigma_I$ being a $2d \times 2d$ matrix specified under (2.15) in Janson \cite{janson}. A rather technical calculation yields (see Appendix \ref{Appendix} for details)
\begin{align} \label{eq:sigmaDiffusive}
\Sigma_I = \frac{2d-1}{4d^2(1+2d-4dp)} \begin{pmatrix}
2d-1 & -1 & \dots & -1  \\
-1 & \ddots  & \ddots  & \vdots  \\
\vdots  & \ddots & \ddots & -1  \\
-1  & \dots & -1 & 2d-1
\end{pmatrix}.
\end{align}
The computation of the matrix exponential is elementary (see Appendix \ref{Appendix} for more details)
 and together with (\ref{eq:sigmaDiffusive}) we obtain for $0 < s \leq t$, 
\begin{align} \label{eq:covariancealmostdone}
\EE(V_sV_t')  = \frac{ (2d-1)}{4d^2(1+2d-4dp)}s \left( \frac{t}{s}\right)^{ \lambda_2} \begin{pmatrix}
2d-1 & -1 & \dots & -1 \\
-1 & \ddots & \ddots & \vdots \\
\vdots & \ddots  & \ddots & -1  \\
-1 & \dots & -1 & 2d-1
\end{pmatrix},
\end{align}
where $ \lambda_2=(2dp-1)/(2d-1)$. With an appeal to (\ref{eq:connectionMERWUrn}) we deduce that 
\begin{align*}
\left( n^{-1/2} \left( S_{\lfloor tn \rfloor} \right), t \geq 0 \right)
\end{align*} 
converges in distribution in $D([0,  \infty))$ towards a $\mathbb{R}^d$-valued process, denoted by $W=(W_t, t \geq 0)$ and specified by $W_t= (V_t^1-V_t^2)e_1' + \dots + (V_t^{2d-1}-V_t^{2d})e_d'$ almost surely, where $V^i$ denotes the $i$-th component of the process $V$ for all $i=1, \dots , 2d$. Since $V$ is a continuous $\mathbb{R}^{2d}$-valued centered Gaussian process started from zero, and $W$ is a linear combination of the components of $V$, it follows that $W$ is also a continuous $\mathbb{R}^d$-valued centered Gaussian process started from zero. With an appeal to the covariance structure (\ref{eq:covariancealmostdone}) of $V$ one verifies that the covariance structure of $W$ is given by (\ref{disp:covariancestructurediffusive}). This proves the assertion.
\end{proof}
We now discuss two consequences of Theorem \ref{thm:cltMERW}.

We introduce for $i=1, \dots , d$ the processes
\begin{align} \label{process:ReinforcedBM}
W_i(t) = \sqrt{ \frac{2d-1}{d(1+2d-4dp)} }t^{\frac{2dp-1}{2d-1}} B_i \left( t^{ \frac{1+2d-4dp}{2d-1}} \right), \qquad t \in \mathbb{R},
\end{align}
where $B_1, \dots , B_d$ are independent one dimensional Brownian motions. In \cite{BertoinUniversality}, for the special case of $d=1$, the process displayed in (\ref{process:ReinforcedBM}) is referred to as a \textit{noise reinforced Brownian motion} and   belongs to a larger class of reinforced processes recently introduced by Bertoin in \cite{BertoinNoise} called \textit{noise reinforced Lévy processes}. It is straightforward to check that $W=(W_1, \dots , W_d)$ is a Gaussian process with the covariance given in (\ref{disp:covariancestructurediffusive}). In other words, the coordinates of $W$ are independent \textit{noise-reinforced Brownian motions}. This insight allows us to prove the convergence of the (rescaled) Euclidean norm process towards a properly scaled Bessel process.

\begin{cor} Let $0<p<p_c^d$. Then, for $n$ tending to infinity, we have the distributional convergence in the sense of Skorokhod
\begin{align*}
\left( \frac{ \| S( \lfloor nt \rfloor ) \|}{\sqrt{n}}, t \geq 0 \right) \implies \left( \sqrt{ \frac{2d-1}{d(1+2d-4dp)} }t^{\frac{2dp-1}{2d-1}} R\left( t^{ \frac{1+2d-4dp}{2d-1}}\right),t \geq 0 \right),
\end{align*}
where $\| \cdot \|$ denotes the Euclidean norm in $\mathbb{R}^d$ and $R=(R(t), t \geq 0)$ is a $d$-dimensional Bessel process started from the origin.
\end{cor}
\begin{proof}
Evidently $n^{-1/2} \| S ( \lfloor nt \rfloor ) \|$ is a continuous functional of $n^{-1/2} S_{\lfloor nt \rfloor}$ in the Skorokhod space $D([0, \infty))$. Hence we can apply Theorem \ref{thm:cltMERW} and obtain as $n$ tends to infinity
\begin{align*}
\frac{\| S ( \lfloor nt \rfloor) \|}{\sqrt{n}} = \left \| \frac{S ( \lfloor nt \rfloor)}{\sqrt{n}} \right\| \implies  \| W_t\| = \sqrt{ \frac{2d-1}{d(1+2d-4dp)} }t^{\frac{2dp-1}{2d-1}} \left\|B \left(t^{ \frac{1+2d-4dp}{2d-1}} \right)\right\|,
\end{align*}
where $B=(B_1(t), \dots , B_d(t), t \geq 0 )$ denotes a $d$-dimensional Brownian motion. 
\end{proof}
Quite recently, Bercu and Laulin in \cite{berculaulincenter} studied the center of mass 
\begin{align*}
G_n = \frac{1}{n}\sum_{k=1}^n S_k
\end{align*}
associated to the MERW. The asymptotic normality of the center of mass of the MERW (CMERW) is then an immediate consequence of the central limit theorem for MERW, Theorem \ref{thm:cltMERW}. 
\begin{cor}[Bercu and Laulin \cite{berculaulincenter}, Theorem 2.3] Let $0<p<p_c^d$ and let us denote $a=(2dp-1)/(2d-1)$. Then, for $n$ tending to infinity, we have the convergence in distribution 
\begin{align*}
\frac{1}{\sqrt{n}}G_n \implies \mathcal{N}\left( \mathbf{0}, \frac{2}{3(1-2a)(2-a)d}I_d \right).
\end{align*}
\end{cor}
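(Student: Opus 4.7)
The plan is to obtain the corollary as an immediate consequence of the functional central limit theorem, Theorem \ref{thm:cltMERW}, combined with the continuous mapping theorem applied to the integration functional. The key observation is that
\begin{align*}
\frac{G_n}{\sqrt{n}} = \frac{1}{n^{3/2}}\sum_{k=1}^n S_k = \int_0^1 \frac{S_{\lfloor tn\rfloor}}{\sqrt{n}}\, dt + \frac{S_n}{n^{3/2}},
\end{align*}
so that, up to an error term of order $n^{-1}\cdot(S_n/\sqrt{n})$ which vanishes in probability (since $S_n/\sqrt{n}$ converges in distribution by Theorem \ref{thm:cltMERW}), the normalised centre of mass is precisely the image of the rescaled path under the integration map $\phi\colon f \mapsto \int_0^1 f(t)\,dt$.

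The functional $\phi$ is continuous from $D([0,1])$ to $\mathbb{R}^d$ at every $f \in C([0,1])$, which is the relevant set of continuity points because the limit process $W$ from Theorem \ref{thm:cltMERW} is almost surely continuous. By the continuous mapping theorem, it follows that
\begin{align*}
\frac{G_n}{\sqrt{n}} \implies \int_0^1 W_t\, dt,
\end{align*}
and the right-hand side is an $\mathbb{R}^d$-valued centered Gaussian vector, being a Bochner integral of a continuous centered Gaussian process.

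It then remains to identify the covariance matrix. Using Fubini and the symmetry in $(s,t)$, together with the covariance formula \eqref{disp:covariancestructurediffusive} rewritten as $\mathbb{E}(W_s W_t') = \frac{2d-1}{d(1+2d-4dp)} s^{1-a} t^a I_d$ for $0<s\leq t$ with $a=(2dp-1)/(2d-1)$, one computes
\begin{align*}
\operatorname{Cov}\!\left(\int_0^1 W_t\,dt\right) = 2\int_0^1 \int_0^t \mathbb{E}(W_s W_t')\, ds\, dt = \frac{2(2d-1)}{3d(1+2d-4dp)(2-a)} I_d,
\end{align*}
after evaluating the elementary nested integral $\int_0^1 \int_0^t s^{1-a}t^a\, ds\, dt = \frac{1}{3(2-a)}$.

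The only nontrivial (but brief) step is to match this expression to the target variance. This reduces to verifying the algebraic identity $(2d-1)(1-2a) = 1+2d-4dp$, which is immediate from the definition of $a$; substituting it yields precisely $\frac{2}{3(1-2a)(2-a)d} I_d$, the claimed covariance. I expect no real obstacle here: the whole argument is a soft application of the functional CLT plus the continuous mapping theorem, with only an elementary variance calculation to execute.
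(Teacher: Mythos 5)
Your proposal is correct and follows essentially the same route as the paper: represent $n^{-1/2}G_n$ as the integral of the rescaled path, apply Theorem \ref{thm:cltMERW} with the continuous mapping theorem, and evaluate the resulting Gaussian covariance via the double integral of \eqref{disp:covariancestructurediffusive}. In fact you are slightly more careful than the paper, which writes $n^{-1/2}G_n = \int_0^1 n^{-1/2}S_{\lfloor nt\rfloor}\,dt$ as an exact identity, whereas your version correctly isolates the negligible boundary term $S_n/n^{3/2}$.
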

\begin{proof}
We observe that 
\begin{align*}
\frac{1}{\sqrt{n}} G_n = \int_0^1 \frac{S_{\lfloor nt \rfloor}}{\sqrt{n}}\mathrm{d}t,
\end{align*}
hence $n^{-1/2} G_n$ is a continuous functional of $n^{-1/2} S_{\lfloor nt \rfloor}$ in $D([0,1])$. 
We apply Theorem \ref{thm:cltMERW} and obtain as $n$ tends to infinity
\begin{align*}
\frac{1}{\sqrt{n}}G_n = \int_0^1 \frac{S_{ \lfloor nt \rfloor}}{\sqrt{n}}\mathrm{d}t \implies \int_0^1 W_t \mathrm{d}t.
\end{align*}
As $W$ is a continuous $\mathbb{R}^d$-valued centered Gaussian process started from zero, so is $\int_0^1 W_t \mathrm{d}t$ and we obtain that its covariance structure is given by
\begin{align*}
\EE \left[ \left( \int_0^1 W_t \mathrm{d}t \right) \left( \int_0^1 W_s \mathrm{d}s \right)'  \right] 
&=2 \int_0^1\int_0^t \EE[W_tW_s'] \mathrm{d}s \mathrm{d}t  \\
&= \frac{2(2d-1)}{d(1+2d-4dp)}\int_0^1 \int_0^t s \left( \frac{t}{s}\right)^{ \frac{2dp-1}{2d-1}}I_d \mathrm{d}s\mathrm{d}t  \\ 
&= \frac{2}{3(1-2a)(2-a)d}I_d.
\end{align*}
This concludes the proof.
\end{proof}
\subsection{The critical regime $(p=p_c^d)$}
In the borderline case of $p=p_c^d$ as given in (\ref{eq:criticalmemoryparameter}) we provide another functional limit theorem for the MERW when properly normalised. 
\begin{thm} \label{thm:MERWcritical} Let $p=p_c^d$. Then, for $n$ tending to infinity, we have the distributional convergence in $D([0, \infty))$:
\begin{align*}
\left( \frac{S_{\lfloor n^t\rfloor}}{\sqrt{\log n}n^{t/2}}, t \geq 0 \right) \implies \frac{1}{\sqrt{d}}(B_t, t \geq 0),
\end{align*}
where $B=(B_t, t \geq 0)$ is a standard $d$-dimensional Brownian motion. 
\end{thm}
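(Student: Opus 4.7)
The plan is to mimic the proof of Theorem \ref{thm:cltMERW} but invoke Janson's functional central limit theorem in the critical regime, namely Theorem 3.31(ii) of \cite{janson}. At $p=p_c^d$ the secondary eigenvalue of the mean replacement matrix $A$ equals $\lambda_2 = (2dp_c^d-1)/(2d-1) = 1/2 = \lambda_1/2$, so we sit precisely on the boundary of Janson's trichotomy, where a logarithmic correction appears and the natural time change becomes exponential rather than linear. The cited theorem will yield convergence in $D([0,\infty))$ of
\[
\left( \frac{X_{\lfloor n^t \rfloor} - n^t v_1}{\sqrt{n^t \log n}},\; t \geq 0 \right)
\]
towards a continuous $\mathbb{R}^{2d}$-valued centred Gaussian process $U = (U_t, t \geq 0)$ whose covariance is of the form $\EE[U_s U_t'] = \min(s,t)\,\Sigma_{II}$ for an explicit matrix $\Sigma_{II}$; in other words, $U$ is a $2d$-dimensional Brownian motion with covariance matrix $\Sigma_{II}$. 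At criticality the integral defining $\Sigma_{II}$ in (2.16) of \cite{janson} degenerates: only the contribution from the $\lambda_2$-eigenspace survives, so $\Sigma_{II}$ should be, up to a scalar, the orthogonal projector onto the hyperplane $u_1^{\perp} = \{ x \in \mathbb{R}^{2d} : x^1 + \dots + x^{2d} = 0 \}$, and thus exhibit the same diagonal/off-diagonal pattern as the matrix appearing in \eqref{eq:sigmaDiffusive}.

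Next, I would apply the continuous mapping theorem to the linear map
\[
\Phi \colon \mathbb{R}^{2d} \to \mathbb{R}^d, \qquad x \longmapsto (x^1-x^2)e_1 + \dots + (x^{2d-1}-x^{2d})e_d,
\]
acting coordinate-wise on $D([0,\infty))$. A key point is that $\Phi(v_1)=\mathbf{0}$, so the deterministic drift $n^t v_1$ vanishes after transformation, and the representation \eqref{eq:connectionMERWUrn} delivers the convergence of $\bigl(S_{\lfloor n^t\rfloor}/(\sqrt{\log n}\, n^{t/2})\bigr)_{t \geq 0}$ in $D([0,\infty))$ to the continuous centred $\mathbb{R}^d$-valued Gaussian process $W$ defined by $W_t^i = U_t^{2i-1} - U_t^{2i}$. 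Since $U$ has Brownian covariance in $t$, so does $W$, and the proof reduces to identifying the instantaneous covariance matrix $\EE[W_1 W_1'] = L\,\Sigma_{II}\,L'$, where $L$ is the $d \times 2d$ matrix implementing $\Phi$, and showing that this equals $(1/d)\, I_d$.

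The main obstacle is the linear algebra in this last step: one must extract the correct prefactor of $\Sigma_{II}$ from Janson's formula (2.16) at criticality, where the usual exponential integral collapses to a boundary term involving the projection onto $u_1^{\perp}$, and then carry out the reduction $L\,\Sigma_{II}\,L'$. Given the block-difference structure of $\Phi$ together with the diagonal/off-diagonal pattern of $\Sigma_{II}$ suggested by the diffusive calculation \eqref{eq:sigmaDiffusive}, one expects all off-diagonal entries of $L\,\Sigma_{II}\,L'$ to vanish (so that $\EE[W_1^i W_1^j]=0$ for $i \neq j$) while each diagonal entry equals $1/d$. This would identify $W$ in law with $(1/\sqrt{d})B$ for $B$ a standard $d$-dimensional Brownian motion, which is the claim.
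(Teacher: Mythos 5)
Your proposal is correct and follows essentially the same route as the paper: invoke Janson's Theorem 3.31(ii) at the critical eigenvalue ratio $\lambda_2/\lambda_1=1/2$, push the resulting $2d$-dimensional Gaussian limit through the linear map of \eqref{eq:connectionMERWUrn} via the continuous mapping theorem, and identify the covariance $\tfrac{1}{d}\min(s,t)I_d$ by the same linear-algebra reduction (your guess that the limiting covariance matrix is a scalar multiple of the projector onto $u_1^\perp$, with the same sign pattern as \eqref{eq:sigmaDiffusive}, is exactly what the paper computes from Janson's Display (3.27)).
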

Again, for $t=1$, we rediscover the asymptotic normality provided in Theorem 3.6 in Bercu and Laulin \cite{berculaulin}. For $d=1$, that is the ERW, we arrive again at Theorem 2 established by Baur and Bertoin \cite{bertoinbaur}.
\begin{proof}[Proof of Theorem \ref{thm:MERWcritical}]
In this critical regime Theorem 3.31(ii) of Janson \cite{janson} applies, it entails that
\begin{align*}
\left( ( \log n)^{-1/2} n^{-t/2} \left( X_{\lfloor n^t \rfloor} - n^t \lambda_1 v_1 \right), t \geq 0 \right)
\end{align*}
converges in distribution in $D([0, \infty))$ towards a continuous $\mathbb{R}^{2d}$-valued centered Gaussian process $V=(V_t, t \geq 0 )$ started from zero. The covariance structure of $V$ is given by Display (3.27) in Janson \cite{janson}, in our case (see Appendix \ref{Appendix} for more details) it simplifies to
\begin{align} \label{proofstep:covariancealmostdone}
\EE(V_sV_t') = \frac{s}{4d^2}  \begin{pmatrix}
2d-1 & -1 & \dots & -1  \\
-1 & \ddots  & \ddots  & \vdots  \\
\vdots  & \ddots & \ddots & -1  \\
-1  & \dots & -1 & 2d-1
\end{pmatrix}, \quad 0 < s \leq t.
\end{align}
As before we have $\lambda_1=1$ and $v_1= \frac{1}{2d}(1, \dots ,1)' \in \mathbb{R}^{2d}$, thus we conclude by (\ref{eq:connectionMERWUrn}) that we have the distributional convergence in $D([0, \infty))$:
\begin{align*}
\left( \frac{S_{\lfloor n^t \rfloor}}{\sqrt{ \log n } n^{t/2}}, t\geq 0 \right) \implies (W_t, t \geq 0),
\end{align*}
where $W=(W_t,t \geq 0)$ is given by $W_t=(V_t^1-V_t^2)e_1' + \dots + (V_t^{2d-1}-V_t^{2d})e_d'$ almost surely and $V^i$ denotes again the $i$-th component of the process $V$ for all $i=1, \dots , 2d$. As $W$ is a linear combination of the components of $V$, it follows that $W$ is a continuous $\mathbb{R}^d$-valued centered Gaussian process started from zero and thus the law of $W$ is completely determined by its covariance structure. Straightforward calculations using (\ref{proofstep:covariancealmostdone}) yield that
\begin{align*}
\EE(W_sW_t')= \frac{1}{d}s I_d, \quad 0 < s \leq t,
\end{align*}
which lets us conclude that $\sqrt{d}W= B=(B_t, t \geq 0)$ is a standard $d$-dimensional Brownian motion. 
\end{proof}
\subsection{The superdiffusive regime $(p_c^d<p<1)$}
In this regime we provide an alternative proof of Theorem 3.7 in Bercu and Laulin \cite{berculaulin}.
\begin{thm}[Bercu and Laulin \cite{berculaulin}, Theorem 3.7] \label{thm:superdiffusiveMERW} Suppose that $p_c^d < p < 1$ and let us denote $\alpha =(2dp-1)/(2d-1)$. Then, for $n$ tending to infinity, we have the almost sure convergence 
\begin{align*}
\left( \frac{S_{ \lfloor tn \rfloor}}{n^\alpha}, t \geq 0 \right) \to (t^\alpha Y, t \geq 0),
\end{align*}
where $Y$ is some $\mathbb{R}^d$-valued random variable different from zero. 
\end{thm}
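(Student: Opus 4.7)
The strategy parallels the preceding theorems: transfer to the Pólya urn via (\ref{eq:connectionMERWUrn}), apply the appropriate asymptotic result of Janson \cite{janson}, and then translate back through the bounded linear projection
\[L:\mathbb{R}^{2d}\to\mathbb{R}^d,\qquad L(x^1,\dots,x^{2d})=\sum_{i=1}^d(x^{2i-1}-x^{2i})e_i.\]
What changes in the superdiffusive regime is that $\lambda_2/\lambda_1=\alpha>1/2$, so the urn is of \emph{large} type and one invokes Janson's Theorem 3.24 in \cite{janson} in place of the Gaussian Theorem 3.31 used earlier. That result furnishes an $\mathbb{R}^{2d}$-valued random vector $\tilde Y$, taking values in the $(2d-1)$-dimensional eigenspace of $A$ associated with $\lambda_2$, such that
\[n^{-\alpha}\bigl(X_n-nv_1\bigr)\longrightarrow \tilde Y\quad\text{a.s.}\]
Moreover, the second-eigenvalue martingale underlying this convergence is bounded in $L^2$, which ensures that $\tilde Y$ is non-degenerate.

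Next I would set $Y:=L(\tilde Y)$. Since $v_1=(2d)^{-1}(1,\dots,1)'\in\ker L$, applying $L$ annihilates the deterministic drift, and (\ref{eq:connectionMERWUrn}) together with the continuous mapping theorem yields
\[\frac{S_n}{n^\alpha}=L\!\left(\frac{X_n-nv_1}{n^\alpha}\right)\longrightarrow Y\quad\text{a.s.}\]
To see that $Y$ is not identically zero, I would verify that $L$ restricted to the $\lambda_2$-eigenspace is non-trivial: the vector $(1,-1,0,\dots,0)$ lies in that eigenspace and is mapped by $L$ to $2e_1\in\mathbb{R}^d$. Combined with the non-degeneracy of $\tilde Y$, this forces $Y$ to have a non-trivial law.

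For the functional statement I would write, for each $t>0$,
\[\frac{S_{\lfloor tn\rfloor}}{n^\alpha}=\left(\frac{\lfloor tn\rfloor}{n}\right)^{\!\alpha}L\!\left(\frac{X_{\lfloor tn\rfloor}-\lfloor tn\rfloor v_1}{\lfloor tn\rfloor^\alpha}\right)\longrightarrow t^\alpha Y\quad\text{a.s.},\]
and upgrade this pointwise statement to uniform convergence on each compact interval $[0,T]$ by splitting at a small $\delta>0$. On $[\delta,T]$, uniform convergence follows from the a.s.\ convergence of $W_m:=m^{-\alpha}(X_m-mv_1)$ to $\tilde Y$ (uniform in $t\in[\delta,T]$ because $\lfloor tn\rfloor\to\infty$ uniformly) together with the uniform continuity of $t\mapsto t^\alpha$. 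On $[0,\delta]$, the a.s.\ finiteness of $C:=\sup_{m\geq 1}|S_m|/m^\alpha$, which is guaranteed by the pointwise convergence $S_m/m^\alpha\to Y$, yields $|S_{\lfloor tn\rfloor}/n^\alpha|\leq\delta^\alpha C$, while $|t^\alpha Y|\leq\delta^\alpha|Y|$; letting $\delta\downarrow 0$ then completes the argument.

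The main obstacle, in my view, is not the projection argument but twofold: first, extracting from Janson's large-urn theorem the precise a.s.\ statement with the correct normalisation and a non-degenerate limit; and second, handling the region $t\to 0$ in the uniform-convergence step, where the trivial bound $|S_k|\leq k$ is useless and one must instead rely on the a.s.\ boundedness of $S_m/m^\alpha$ obtained a posteriori from the pointwise convergence.
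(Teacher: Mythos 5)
Your proposal matches the paper's proof: both invoke Janson's Theorem 3.24 for ``large'' urns to obtain the almost sure convergence of $n^{-\alpha}\left(X_{\lfloor tn\rfloor}-tn\lambda_1 v_1\right)$ to $t^{\alpha}$ times a random vector in the $\lambda_2$-eigenspace, and then project through the linear map underlying (\ref{eq:connectionMERWUrn}) via the continuous mapping theorem. The additional detail you supply --- upgrading pointwise to locally uniform convergence and checking that the projection is non-trivial on the eigenspace --- is material the paper delegates to Janson's functional statement and to its (rather terse) description of the limiting vector, so the two arguments are essentially identical.
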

In stark contrast to the \textit{diffusive} and \textit{critical regime} discussed in the previous two sections, the distribution of $Y$ does in fact depend on the law of the initial step of the MERW. We observe for example that for $p=1$, the behaviour of the MERW becomes deterministic, indeed we have $S_{\lfloor tn \rfloor}= \lfloor tn\rfloor S_1$ almost surely and the assertion of Theorem \ref{thm:superdiffusiveMERW} trivially holds with $Y$ having the same distribution as $S_1$. In this direction, see also the remarks above Theorem 3.9 in Janson \cite{janson}. Notably, Bercu in \cite{bercu} provided the first rigorous mathematical proof showing that in the case $d=1$ the random variable $Y$ does not follow a Gaussian distribution. 
\begin{proof}[Proof of Theorem \ref{thm:superdiffusiveMERW}]
We observe that, in the notation of Theorem 3.24 of Janson \cite{janson}, we have $\Lambda_{III}' = \{ (2dp-1)/(2d-1)\}=\{ \lambda_2\}.$ We are therefore in the setting of the last part of the cited theorem and get that 
\begin{align*}
\left( n^{- \alpha} \left( X_{ \lfloor tn \rfloor}-tn \lambda_1 v_1 \right), t \geq 0 \right)
\end{align*}
converges almost surely to $\left( t^\alpha \hat{W}_t, t \geq 0 \right),$ where $\hat{W}=\left( \hat{W}^1, \dots , \hat{W}^{2d} \right)$ is some non-zero random vector lying in the eigenspace $E_{\lambda_2}$ of the mean replacement matrix $A$ as displayed in (\ref{mat:meanreplacementmat}), that is  
\begin{align*}
\hat{W} \in \{ v \in \mathbb{R}^{2d} : v = \lambda v_2 \text{ for some } \lambda \in \mathbb{R} \setminus \{0\} \},
\end{align*}
where $v_2$ is the corresponding eigenvector to the eigenvalue $\lambda_2$. Since $\lambda_1=1$ and $v_1= \frac{1}{2d}(1, \dots , 1)' \in \mathbb{R}^{2d}$ we appeal to the connection established in (\ref{eq:connectionMERWUrn}) and conclude the proof as we note that $$Y=\left( \hat{W}^1- \hat{W}^2\right)e_1' + \dots + \left( \hat{W}^{2d-1} - \hat{W}^{2d} \right) e_d'$$ almost surely.
\end{proof}
\section*{Acknowledgements}
I would like to express my gratitude to both of my supervisors, Jean Bertoin and Erich Baur, for their patience and valuable input while I was writing this article. Further, I want to thank two anonymous referees for their careful reading of an earlier version of this manuscript and their many insightful comments and suggestions to help improve this article.
\appendix
\section{Appendix: Some technical calculations} \label{Appendix}
In this section we provide the interested reader for convenience with technical calculations that have been used in Section \ref{sec:four} during the proofs of the main results. 

We recall by (\ref{mat:meanreplacementmat}), see also Display (2.1) in Janson \cite{janson}, that the \textit{mean replacement matrix} of the urn process introduced in Section \ref{sec:three} is given by 
\begin{align*}
A = \begin{pmatrix}
p & \frac{1-p}{2d-1} & \dots & \dots & \frac{1-p}{2d-1}\\
\frac{1-p}{2d-1} & \ddots & \ddots &  & \vdots \\ 
\vdots & \ddots&\ddots & \ddots & \vdots \\
 \vdots &  & \ddots  & \ddots & \frac{1-p}{2d-1} \\
 \frac{1-p}{2d-1} & \dots & \dots&  \frac{1-p}{2d-1} & p
\end{pmatrix}.
\end{align*}
Recall that $A$ has eigenvalues $\lambda_1=1$ and $\lambda_2=(2dp-1)/(2d-1)$, where $\lambda_1$ is of multiplicity one and $\lambda_2$ is of multiplicity $2d-1$. The right and left eigenvectors corresponding to the largest eigenvalue $\lambda_1$ are given by $v_1 = \frac{1}{2d}(1, \dots ,1 )' \in \mathbb{R}^{2d}$ and $u_1=(1, \dots , 1) \in \mathbb{R}^{2d}$ respectively. We have chosen $v_1$ such that its $L^1$-norm is one and $u_1v_1=1$. 

We will use the Jordan decomposition of the matrix A in the following form (see for example \cite{nomizu}, 1979, Theorem 7.6). There exists a decomposition of the complex space $\mathbb{C}^{2d}$ as a direct sum $\oplus E_{\lambda}$ of generalised eigenspaces $E_{\lambda}$, where $\lambda = \lambda_1,\lambda_2$. Equivalently, there exist (orthogonal) projections $P_{\lambda_1}, P_{\lambda_2}$ that commute with $A$ and satisfy \begin{align} 
P_{\lambda_1} + P_{\lambda_2}&=I_{2d}, \label{eq:DecompositionIdentityMatrix}\\
AP_{\lambda}  & = \lambda P_{\lambda}, \label{eq:Commuting}
\end{align}
 where $I_{2d}$ denotes the $2d \times 2d$ identity matrix. We have
\begin{align*}
P_{\lambda_1}= v_1u_1' = \frac{1}{2d}J_{2d},
\end{align*} 
where $J_{2d}$ denotes the $2d \times 2d$ matrix saturated by ones and then, by (\ref{eq:DecompositionIdentityMatrix}), also
\begin{align} \label{proj:Projection2}
P_{\lambda_2} = \frac{1}{2d}\begin{pmatrix}
2d-1 & -1 & \dots & -1  \\
-1 & \ddots  & \ddots  & \vdots  \\
\vdots  & \ddots & \ddots & -1  \\
-1  & \dots & -1 & 2d-1
\end{pmatrix}.
\end{align} 
Key quantities that govern the asymptotic behaviour of the Pólya urn are given by the following $2d \times 2d$ matrices, specified under (2.15) respectively (2.16) in Janson \cite{janson}:
\begin{align}
\Sigma_I &:= \frac{1}{2d} \int_0^\infty P_{\lambda_2} e^{2sA} \label{eq:SigmaI} P_{\lambda_2} e^{- \lambda_1s} \mathrm{d}s, \\
\Sigma_{II} &:= \frac{1}{2d} P_{\lambda_2}. \label{eq:SigmaII}
\end{align}
By the definition of the matrix exponential and (\ref{eq:Commuting}) one finds that for all $s \in \mathbb{R}$ $$P_{\lambda_2} e^{2sA} = e^{2s \lambda_2} P_{\lambda_2}$$ and, since $P_{\lambda_2}^2= P_{\lambda_2}$, (\ref{eq:SigmaI}) simplifies to 
\begin{align*}
\Sigma_I = \frac{1}{2d} \int_0^\infty e^{(2 \lambda_2-1)s} P_{\lambda_2} \mathrm{d}s.
\end{align*}
We notice that the integral above converges if and only if $p<p_c^d$ and in that case we obtain 
\begin{align*}
\Sigma_I = \frac{2d-1}{4d^2(1+2d-4dp)} P_{\lambda_2}
\end{align*}
and, with an appeal to (\ref{proj:Projection2}), we arrive at (\ref{eq:sigmaDiffusive}) used during the proof of Theorem \ref{thm:cltMERW}. Further, we have for all $s \in \mathbb{R}$
\begin{align*}
e^{sA} = e^{ \lambda_1s} P_{\lambda_1} + e^{\lambda_2s} P_{\lambda_2}
\end{align*}
and then, again by the orthogonality of the projections $P_{\lambda_1}, P_{\lambda_2}$, we obtain for all $0<s \leq t$ 
\begin{align*}
s \Sigma_I e^{ \log (t/s)A} &=  s \frac{2d-1}{2d(1+2d-4dp)}P_{\lambda_2} \left[ \left( \frac{t}{s}\right)^{\lambda_1}P_{\lambda_1} + \left( \frac{t}{s}\right)^{\lambda_2}P_{\lambda_2} \right] \\
&= \frac{s(2d-1)}{2d(1+2d-4dp)} \left( \frac{t}{s}\right)^{\lambda_2} P_{\lambda_2}
\end{align*}
and we arrive at (\ref{eq:covariancealmostdone}).

Finally, thanks to Display (3.27) in Janson \cite{janson} we know that the covariance structure of the $\mathbb{R}^{2d}$-valued centered Gaussian process $V$, used during the proof of Theorem \ref{thm:MERWcritical}, is given by 
\begin{align*}
\EE(V_s V_t')= s \Sigma_{II}, \qquad 0 < s \leq t,
\end{align*}
and we arrive at (\ref{proofstep:covariancealmostdone}).
\bibliographystyle{abbrvurl}
\bibliography{bibliography}

\begin{thebibliography}{10}

\bibitem{karlin}
K.~B. Athreya and S.~Karlin.
\newblock Embedding of urn schemes into continuous time {M}arkov branching
  processes and related limit theorems.
\newblock {\em Ann. Math. Statist., Volume \textbf{39}, Number 6}, pages
  1801--1817, 1968.
\newblock \href {https://doi.org/10.1214/aoms/1177698013}
  {\path{doi:10.1214/aoms/1177698013}}.

\bibitem{athreya}
K.~B. Athreya and P.~E. Ney.
\newblock Branching process.
\newblock {\em Encyclopedia of Environmetrics}, 1, 2006.
\newblock \href {https://doi.org/10.1007/978-3-642-65371-1}
  {\path{doi:10.1007/978-3-642-65371-1}}.

\bibitem{arxivbaur}
E.~Baur.
\newblock On a class of random walks with reinforced memory.
\newblock {\em Journal of Statistical Physics}, 181(3):772--802, 2020.
\newblock \href {https://doi.org/10.1007/s10955-020-02602-3}
  {\path{doi:10.1007/s10955-020-02602-3}}.

\bibitem{bertoinbaur}
E.~Baur and J.~Bertoin.
\newblock Elephant random walks and their connection to {P}ólya-type urns.
\newblock {\em Phys. Rev. E \textbf{94}, 052134}, 2016.
\newblock \href {https://doi.org/10.1103/PhysRevE.94.052134}
  {\path{doi:10.1103/PhysRevE.94.052134}}.

\bibitem{bercu}
B.~Bercu.
\newblock A martingale approach for the elephant random walk.
\newblock {\em Journal of Physics A: Mathematical and Theoretical, Volume
  \textbf{51}, Number 1}, 2017.
\newblock \href {https://doi.org/10.1088/1751-8121/aa95a6}
  {\path{doi:10.1088/1751-8121/aa95a6}}.

\bibitem{berculaulin}
B.~Bercu and L.~Laulin.
\newblock On the multi-dimensional elephant random walk.
\newblock {\em Journal of Statistical Physics, Volume \textbf{175}}, pages
  1146--1163, 2019.
\newblock \href {https://doi.org/10.1007/s10955-019-02282-8}
  {\path{doi:10.1007/s10955-019-02282-8}}.

\bibitem{berculaulincenter}
B.~Bercu and L.~Laulin.
\newblock On the center of mass of the elephant random walk.
\newblock {\em Stochastic Processes and their Applications}, 133:111--128,
  2021.
\newblock \href {https://doi.org/10.1016/j.spa.2020.11.004}
  {\path{doi:10.1016/j.spa.2020.11.004}}.

\bibitem{BertoinNoise}
J.~Bertoin.
\newblock {Noise reinforcement for Lévy processes}.
\newblock {\em Annales de l'Institut Henri Poincaré, Probabilités et
  Statistiques}, 56(3):2236 -- 2252, 2020.
\newblock \href {https://doi.org/10.1214/19-AIHP1037}
  {\path{doi:10.1214/19-AIHP1037}}.

\bibitem{BertoinUniversality}
J.~Bertoin.
\newblock Universality of noise reinforced {B}rownian motions.
\newblock {\em In and Out of Equilibrium 3: Celebrating Vladas Sidoravicius},
  pages 147--161, 2020.
\newblock \href {https://doi.org/10.1007/978-3-030-60754-8}
  {\path{doi:10.1007/978-3-030-60754-8}}.

\bibitem{chauvin}
B.~Chauvin, N.~Pouyanne, and R.~Sahnoun.
\newblock Limit distributions for large {P}ólya urns.
\newblock {\em Ann. Appl. Probab.}, 21(1):1--32, 02 2011.
\newblock \href {https://doi.org/10.1214/10-AAP696}
  {\path{doi:10.1214/10-AAP696}}.

\bibitem{colettischuetz}
C.~F. Coletti, R.~Gava, and G.~M. Schütz.
\newblock Central limit theorem and related results for the elephant random
  walk.
\newblock {\em Journal of Mathematical Physics \textbf{58}, 053303}, 2017.
\newblock \href {https://doi.org/10.1063/1.4983566}
  {\path{doi:10.1063/1.4983566}}.

\bibitem{cressoni}
J.~C. Cressoni, G.~M. Viswanathan, and M.~A.~A. da~Silva.
\newblock Exact solution of an anisotropic 2d random walk model with strong
  memory correlations.
\newblock {\em Journal of Physics A: Mathematical and Theoretical},
  46(50):505002, nov 2013.
\newblock \href {https://doi.org/10.1088/1751-8113/46/50/505002}
  {\path{doi:10.1088/1751-8113/46/50/505002}}.

\bibitem{Jacod}
J.~Jacod and A.~N. Shiryaev.
\newblock {\em Limit Theorems for Stochastic Processes}.
\newblock Grundlehren der mathematischen Wissenschaften. Springer-Verlag Berlin
  Heidelberg, 2003.
\newblock \href {https://doi.org/10.1007/978-3-662-05265-5}
  {\path{doi:10.1007/978-3-662-05265-5}}.

\bibitem{janson}
S.~Janson.
\newblock Functional limit theorems for multitype branching processes and
  generalized {P}ólya urns.
\newblock {\em Stochastic Processes and their Applications, Volume
  \textbf{110}}, pages 177--245, 2004.
\newblock \href {https://doi.org/10.1016/j.spa.2003.12.002}
  {\path{doi:10.1016/j.spa.2003.12.002}}.

\bibitem{kesten}
H.~Kesten and B.~P. Stigum.
\newblock Additional limit theorems for indecomposable multidimensional
  {G}alton-{W}atson processes.
\newblock {\em Ann. Math. Statist., Volume \textbf{37}, Number 6}, pages
  1463--1481, 1966.
\newblock \href {https://doi.org/10.1214/aoms/1177699139}
  {\path{doi:10.1214/aoms/1177699139}}.

\bibitem{kuersten}
R.~K\"ursten.
\newblock Random recursive trees and the elephant random walk.
\newblock {\em Phys. Rev. E}, 93:032111, Mar 2016.
\newblock \href {https://doi.org/10.1103/PhysRevE.93.032111}
  {\path{doi:10.1103/PhysRevE.93.032111}}.

\bibitem{lyu}
J.~Lyu, J.~Xin, and Y.~Yu.
\newblock Residual diffusivity in elephant random walk models with stops.
\newblock 2017.
\newblock \href {http://arxiv.org/abs/1705.02711} {\path{arXiv:1705.02711}}.

\bibitem{nomizu}
K.~Nomizu.
\newblock {\em Fundamentals of linear algebra}.
\newblock McGraw-Hill. New York. US, 1966.

\bibitem{schuetz}
G.~M. Schütz and S.~Trimper.
\newblock Elephants can always remember: Exact long-range memory effects in a
  non-markovian random walk.
\newblock {\em Phys. Rev. E \textbf{70}, 045101(R)}, 2004.
\newblock \href {https://doi.org/10.1103/PhysRevE.70.045101}
  {\path{doi:10.1103/PhysRevE.70.045101}}.

\end{thebibliography}
\end{document}